\newtheorem{theorem}{Theorem}
\theoremstyle{plain}
\newtheorem{example}{Example}
\newtheorem{lemma}{Lemma}
\begin{document}
\title{ Inverse nodal problems for Dirac differential operators with jump
condition.}
\author{Baki Keskin}
\curraddr{Department of Mathematics, Faculty of Science, Cumhuriyet
University 58140 \\
Sivas, TURKEY}
\email{bkeskin@cumhuriyet.edu.tr}
\subjclass[2000]{34A55, 34L40, 34L05, 34K29, 34K10, }
\keywords{Dirac differential operator, discontinuity conditions inside the
interval, inverse nodal problem, uniqueness theorem,}

\begin{abstract}
This paper deals with an inverse nodal problem for the Dirac differential
operator with the discontinuity conditions inside the interval. We obtain a
new approach for asymptotic expressions of the solutions and prove that the
coefficients of \ the Dirac system can be determined uniquely by a dense
subset of the nodal points (zeros of the first component of the
eigenfunction). We also provide an algorithm for constructing the solution
of this inverse nodal problem.
\end{abstract}

\maketitle

\section{\textbf{Introduction}}

Consider the following boundary value problem $L$ generated by the system of
discontinuous Dirac differential equations%
\begin{equation}
BY^{\prime }(x)+\Omega (x)Y(x)=\mu Y(x),\text{ \ }x\in (0,\pi ),
\end{equation}%
with the boundary conditions%
\begin{eqnarray}
y_{1}(0)\sin \theta +y_{2}(0)\cos \theta &=&0\medskip \\
y_{1}(\pi )\sin \beta +y_{2}(\pi )\cos \beta &=&0
\end{eqnarray}%
and discontinuity conditions%
\begin{eqnarray}
y_{1}(\frac{\pi }{2}+0) &=&\sigma y_{1}(\frac{\pi }{2}-0) \\
y_{2}(\frac{\pi }{2}+0) &=&\sigma ^{-1}y_{2}(\frac{\pi }{2}-0).  \notag
\end{eqnarray}%
Here $0\leq <\beta $, $\theta <\pi ,$ $(\beta ,\theta \in 
%TCIMACRO{\U{211d} }%
%BeginExpansion
\mathbb{R}
%EndExpansion
)$, $\sigma \in 
%TCIMACRO{\U{211d} }%
%BeginExpansion
\mathbb{R}
%EndExpansion
^{+},$ $B=\left( 
\begin{array}{cc}
0 & 1 \\ 
-1 & 0%
\end{array}%
\right) ,$ $\Omega (x)=\left( 
\begin{array}{cc}
p(x) & 0 \\ 
0 & q(x)%
\end{array}%
\right) ,$ $Y(x)=(y_{1}(x),y_{2}(x))^{T},$ $\Omega (x)$ is real-valued
functions in $W_{2}^{1}(0,\pi )$ and $\mu $ is the spectral parameter$.$

The inverse spectral analysis play an important role in mathematics,
mathematical physics and have many applications in natural sciences.
Boundary value problems with discontinuity conditions inside the interval
often appear in mathematics, mechanics, physics, geophysics and other
branches of natural properties. The basic and comprehensive results about
the Dirac operators were given in \cite{Lev}. Classical inverse problems for
the Dirac operators have been extensively studied in various publications
(see \cite{Alb}, \cite{Gsy1}, \cite{Gus}, \cite{Hor}, \cite{Ksk2}, \cite{Ksk}
, \cite{wang2} and the references therein). Inverse nodal problem was first
proposed and solved for the Sturm--Liouville operator by McLaughlin \cite%
{mc1} in 1988. In this study, it has been shown that knowledge of a dense
subset of zeros of eigenfunctions ,called nodal points, uniquelly determine
the potential of the Sturm Liouville operator up to a constant. In 1989,
Hald and McLaughlin consider an inverse nodal Sturm--Liouville problem\ with
more general boundary conditions and give some numerical schemes for the
reconstruction of the potential from nodal points \cite{H}. Yang proposed an
algorithm to solve an inverse nodal problem for the Sturm--Liouville
operator in 1997 \cite{yang}. Such problems have been considered by several
researchers in (\cite{Br2}, \cite{But}, \cite{ch}, \cite{gul}, \cite{Hu}, 
\cite{guo1}, \cite{kesk3}, \cite{kesk4}, \cite{Law}, \cite{Ozkan}, \cite%
{wang}, \cite{wang3}, \cite{wei}, \cite{Yur}, \cite{yng1}, \cite{Yang3} and 
\cite{Yang4} ) and other works.The inverse nodal problems for the Dirac
operators with various boundary conditions have been studied and shown that
a dense subset of the zeros of the first component of the eigenfunctions
alone can determine the coefficients of discussed problem \cite{Guo}, \cite%
{keskin5}, \cite{Yang2} and \cite{Yang5}. Since, there are not sufficiently
good asymptotic expressions for the integral equations of the solutions of
the discontinuous Dirac operator, inverse nodal problems for this kind of
operator has not been considered before. The aim of this paper is to study
an inverse problem of recovering the coefficients of the discontinuous Dirac
system from nodal characteristics. In this paper, we have obtained a new
approach for calculating the asymptotic expressions of the solutions of the
considered problem. With the help of the these asymptotics, more accurate
expressions of the eigenvalues and zeros of the first component of the
eigenfunctions have been calculated. At the end of this paper we gave an
algorithm for reconstructing the operator by nodal data.

Let $\psi (x,\mu )=\left( 
\begin{array}{c}
\psi _{1}(x,\mu ) \\ 
\psi _{2}(x,\mu )%
\end{array}%
\right) $ be the solution of the system (1) under the initial condition $%
\psi (0,\mu )=\left( 
\begin{array}{c}
\cos \theta \\ 
-\sin \theta%
\end{array}%
\right) $. It is clear that for each fixed $x\in \left( 0,\pi \right) $, $%
\psi (x,\mu )$ is entire in $\mu $ and satisfies the following integral
equations:\newline
for $x<\dfrac{\pi }{2}$%
\begin{eqnarray}
\psi _{1}(x,\mu ) &=&\cos \left( \mu x-\theta \right) \\
&&-\int\limits_{0}^{x}\sin \mu (t-x)\psi
_{1}(t)p(t)dt+\int\limits_{0}^{x}\cos \mu (t-x)\psi _{2}(t)q(t)dt  \notag \\
\psi _{2}(x,\mu ) &=&\sin \left( \mu x-\theta \right) \\
&&-\int\limits_{0}^{x}\cos \mu (t-x)\psi
_{1}(t)p(t)dt-\int\limits_{0}^{x}\sin \mu (t-x)\psi _{2}(t)q(t)dt  \notag
\end{eqnarray}%
and for $x>\dfrac{\pi }{2}$%
\begin{eqnarray}
\psi _{1}(x,\mu ) &=&\sigma ^{+}\cos \left( \mu x-\theta \right) +\sigma
^{-}\cos \left( \mu (\pi -x)-\theta \right) \\
&&-\int\limits_{0}^{\pi /2}\left( \sigma ^{+}\sin \mu (t-x)+\sigma ^{-}\sin
\mu (x+t-\pi )\right) \psi _{1}(t)p(t)dt  \notag
\end{eqnarray}%
\begin{eqnarray*}
&&+\int\limits_{0}^{\pi /2}\sigma ^{+}\cos \mu (t-x)+\sigma ^{-}\cos \mu
(x+t-\pi )\psi _{2}(t)q(t)dt \\
&&-\int\limits_{\pi /2}^{x}\sin \mu (t-x)\psi _{1}(t)p(t)dt+\int\limits_{\pi
/2}^{x}\cos \mu (t-x)\psi _{2}(t)q(t)dt\medskip
\end{eqnarray*}%
\begin{eqnarray}
\psi _{2}(x,\mu ) &=&\sigma ^{+}\sin \left( \mu x-\theta \right) -\sigma
^{-}\sin \left( \mu (\pi -x)-\theta \right) \medskip  \notag \\
&&-\int\limits_{0}^{\pi /2}\left( \sigma ^{+}\cos \mu (t-x)-\sigma ^{-}\cos
\mu (x+t-\pi )\right) \psi _{1}(t)p(t)dt\medskip \\
&&-\int\limits_{0}^{\pi /2}\sigma ^{+}\sin \mu (t-x)-\sigma ^{-}\sin \mu
(x+t-\pi )\psi _{2}(t)q(t)dt\medskip  \notag \\
&&-\int\limits_{\pi /2}^{x}\cos \mu (t-x)\psi _{1}(t)p(t)dt-\int\limits_{\pi
/2}^{x}\sin \mu (t-x)\psi _{2}(t)q(t)dt  \notag
\end{eqnarray}%
where, $\sigma ^{\pm }=\dfrac{1}{2}\left( \sigma \pm \dfrac{1}{\sigma }%
\right) .$ In the case where $p(x)=V(x)+m,$ $q(x)=V(x)-m,$ $V(x)$ is a
potential function and $m$ is the mass of a particle, (1) is called a one
dimensional stationary Dirac system in relativistic Schr\"{o}dinger operator
in quantum theory. Throughout this paper, we put $\left( 
\begin{array}{c}
p(x) \\ 
q(x)%
\end{array}%
\right) =\left( 
\begin{array}{c}
V(x)+m \\ 
V(x)-m%
\end{array}%
\right) $ and without loss of generality we assume that $\int_{0}^{\pi
}V(t)dt=0.$

\section{Main Results}

\begin{theorem}
For $\left\vert \mu \right\vert \rightarrow \infty ,$ uniformly in $x,$the
functions $\psi _{1}(x,\mu )$ and $\psi _{2}(x,\mu )$ have the following
representations :

for $x<\dfrac{\pi }{2}$%
\begin{eqnarray}
\psi _{1}(x,\mu ) &=&\cos [\mu x-\rho (x)-\theta ]\medskip  \notag \\
&&+\dfrac{m\sin \theta }{\mu }\sin [\mu x-\rho (x)]\medskip \\
&&+\dfrac{m^{2}x}{2\mu }\sin [\mu x-\rho (x)-\theta ]+o(\dfrac{e^{\mid \tau
\mid x}}{\mu })  \notag
\end{eqnarray}%
\begin{eqnarray}
\psi _{2}(x,\mu ) &=&\sin [\mu x-\rho (x)-\theta ]\medskip  \notag \\
&&-\dfrac{m\cos \theta }{\mu }\sin [\mu x-\rho (x)]\medskip \\
&&-\dfrac{m^{2}x}{2\mu }\cos [\mu x-\rho (x)-\theta ]+o(\dfrac{e^{\mid \tau
\mid x}}{\mu })  \notag
\end{eqnarray}%
and for $x>\dfrac{\pi }{2}$%
\begin{eqnarray}
\psi _{1}(x,\mu ) &=&\sigma ^{+}\cos [\mu x-\rho (x)-\theta ]\medskip  \notag
\\
&&+\sigma ^{-}\cos [\mu x-\rho (x)-\mu \pi +2\rho \left( \frac{\pi }{2}%
\right) +\theta ]  \notag \\
&&+\dfrac{\sigma ^{+}m\sin \theta }{\mu }\sin [\mu x-\rho (x)] \\
&&-\dfrac{\sigma ^{-}m\sin \theta }{\mu }\sin [\mu x-\rho (x)-\mu \pi +2\rho
\left( \frac{\pi }{2}\right) ]  \notag \\
&&+\dfrac{\sigma ^{+}m^{2}x}{2\mu }\sin [\mu x-\rho (x)-\theta ]  \notag \\
&&-\dfrac{\sigma ^{-}m^{2}\left( \pi -x\right) }{2\mu }\sin [\mu x-\rho
(x)-\mu \pi +2\rho \left( \frac{\pi }{2}\right) +\theta ]+o(\dfrac{e^{\mid
\tau \mid x}}{\mu })  \notag
\end{eqnarray}%
\begin{eqnarray}
\psi _{2}(x,\mu ) &=&\sigma ^{+}\sin [\mu x-\rho (x)-\theta ]  \notag \\
&&+\sigma ^{-}\sin [\mu x-\rho (x)-\mu \pi +2\rho \left( \frac{\pi }{2}%
\right) +\theta ]  \notag \\
&&-\dfrac{\sigma ^{+}m\cos \theta }{\mu }\sin [\mu x-\rho (x)] \\
&&+\dfrac{\sigma ^{-}m\sin \theta }{\mu }\sin [\mu x-\rho (x)-\mu \pi +2\rho
\left( \frac{\pi }{2}\right) ]  \notag \\
&&-\dfrac{\sigma ^{+}m^{2}x}{2\mu }\cos [\mu x-\rho (x)-\theta ]  \notag \\
&&+\dfrac{\sigma ^{-}m^{2}\left( \pi -x\right) }{2\mu }\cos [\mu x-\rho
(x)-\mu \pi +2\rho \left( \frac{\pi }{2}\right) +\theta ]+o(\dfrac{e^{\mid
\tau \mid x}}{\mu })  \notag
\end{eqnarray}%
where, $\rho (x)=\dfrac{1}{2}\int\limits_{0}^{x}(p(t)+q(t))dt$ and $\tau =%
\func{Im}\mu .$\bigskip
\end{theorem}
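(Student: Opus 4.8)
The plan is to run a method of successive approximations on the Volterra integral equations (4)--(5) (for $x<\pi/2$) and (6)--(7) (for $x>\pi/2$), organised as an expansion in powers of the mass $m$. The guiding observation is that, writing $p=V+m$ and $q=V-m$, the diagonal potential splits into an even part $V$ and a mass part $m$: the even part should resum into the phase shift $\rho(x)=\int_0^x V$, while the mass part produces the explicit $1/\mu$ corrections through oscillatory-integral estimates. I would first record the crude a priori bound $\psi_1,\psi_2=O(e^{|\tau|x})$ uniformly in $x\in[0,\pi]$: since the kernels satisfy $|\sin\mu(t-x)|,|\cos\mu(t-x)|\le e^{|\tau|(x-t)}$ and $p,q$ are bounded, a Gronwall estimate on the Neumann series yields this with a constant depending only on $V$ and $m$.

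The heart of the matter is organising the series by powers of $m$. At order $m^{0}$ only the even weight $\tfrac12(p+q)=V$ acts; substituting the free terms and using product-to-sum identities, the resulting integrals are non-oscillatory in $t$ and contribute $\rho(x)\sin(\mu x-\theta)$ to $\psi_1$ and $-\rho(x)\cos(\mu x-\theta)$ to $\psi_2$, which are the first Taylor terms of $\cos(\mu x-\rho(x)-\theta)$ and $\sin(\mu x-\rho(x)-\theta)$; iterating reproduces the full $\rho^n/n!$ series and resums to the phase-shifted leading terms. I would then substitute these phase-shifted leading terms into the mass part, of weight $\tfrac12(q-p)=-m$; product-to-sum now yields integrals oscillating at frequency $2\mu$ whose phase already carries $-2\rho(t)$. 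Integration by parts extracts the boundary contribution $\tfrac{m\sin\theta}{\mu}\sin(\mu x-\rho(x))$ in $\psi_1$ (and its analogue in $\psi_2$) with the full phase $\mu x-\rho(x)$, while the residual integral, whose weight $\rho'=V$ lies in $L^2\subset L^1$, is $o(1/\mu)$ by the Riemann--Lebesgue lemma. The second-order term $\tfrac{m^2x}{2\mu}$ arises at order $m^{2}$: the product of two oscillatory kernels of frequency $2\mu$ has a nonzero mean component, and integrating this mean over $[0,x]$ produces a contribution proportional to $x/\mu$, whose coefficient a short computation identifies as $m^2/2$ and whose phase is $\mu x-\rho(x)-\theta$. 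The remaining iterates are $o(e^{|\tau|x}/\mu)$ by the Volterra tail estimate combined with the oscillatory gain, giving the two formulas for $x<\pi/2$.

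For $x>\pi/2$ I would apply the identical scheme to the given integral equations (6)--(7), which already encode the jump at $\pi/2$ through the constants $\sigma^{\pm}=\tfrac12(\sigma\pm\sigma^{-1})$ and the reflected kernels $\sin\mu(x+t-\pi)$, $\cos\mu(x+t-\pi)$. The free terms split into a transmitted part (phase $\mu x-\theta$) and a reflected part (phase $\mu(\pi-x)-\theta$); resumming the $V$-contributions along the forward path $[\pi/2,x]$ and the reflected path $[0,\pi/2]$ turns these into $\mu x-\rho(x)-\theta$ and $\mu x-\rho(x)-\mu\pi+2\rho(\tfrac{\pi}{2})+\theta$ respectively, the extra $2\rho(\tfrac{\pi}{2})$ being exactly the phase accumulated on the segment $[0,\pi/2]$ traversed by the reflected wave. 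The mass terms then generate, through the same integration-by-parts and mean-component mechanism, the $\sigma^{\pm}m/\mu$ and $\sigma^{\pm}m^{2}/\mu$ corrections, the reflected contributions carrying the factor $(\pi-x)$ in place of $x$ from integrating the mean part in the reflected variable.

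I expect the main obstacles to be twofold. First, ensuring that the $O(1/\mu)$ corrections carry the full phase $\mu x-\rho(x)$ rather than the bare $\mu x$: a single naive substitution of the free term produces only $\sin(\mu x)$ in these corrections, so one must genuinely interleave the $V$-resummation with the mass expansion and verify that the cross terms of order $mV/\mu$ supply exactly the missing $-\rho(x)$ inside the arguments. Second, the clean isolation of the second-order $\tfrac{m^{2}x}{2\mu}$ and $\tfrac{m^{2}(\pi-x)}{2\mu}$ terms from the mean value of a product of two oscillatory kernels, and, in the $x>\pi/2$ case, the bookkeeping of the four kernel types against the $\sigma^{\pm}$ constants so that the transmitted and reflected phases and their $1/\mu$ amplitudes emerge with the signs recorded in the statement.
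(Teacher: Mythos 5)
Your proposal is correct and takes essentially the same route as the paper: the paper likewise runs successive approximations on the integral equations (5)--(8), computing every iterate in closed form as the three families $(-1)^{r}\rho^{r}(x)/r!$ (leading), $\tfrac{m}{\mu}\,\rho^{r-1}(x)/(r-1)!$ and $\tfrac{m^{2}x}{2\mu}\,\rho^{r-2}(x)/(r-2)!$ (with $\rho(\tfrac{\pi}{2})-\rho_{1}(x)$ replacing $\rho(x)$ along the reflected path for $x>\tfrac{\pi}{2}$), and then resumming each family into the phase-shifted trigonometric functions and the $1/\mu$ amplitudes --- which is precisely the cross-term mechanism you flag as your first obstacle, handled automatically by the explicit per-iterate bookkeeping. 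Your reorganization of the same Neumann series by powers of $m$, with Gronwall, integration by parts and Riemann--Lebesgue in place of the closed-form iterates, is an equivalent regrouping rather than a genuinely different argument.
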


\begin{proof}
Firstly, in order to apply successive approximation method to the equations
(5) and (6), put 
\begin{equation*}
\begin{array}{l}
\psi _{1,0}(x,\mu )=\cos \left( \mu x-\theta \right) \\ 
\psi _{2,0}(x,\mu )=\sin \left( \mu x-\theta \right) \\ 
\psi _{1,r+1}(x,\mu )=\int\limits_{0}^{x}\sin \mu (x-t)\psi
_{1,r}(t)p(t)dt+\int\limits_{0}^{x}\cos \mu (x-t)\psi _{2,r}(t)q(t)dt\medskip
\\ 
\psi _{2,r+1}(x,\mu )=-\int\limits_{0}^{x}\cos \mu (x-t)\psi
_{1,r}(t)p(t)dt+\medskip \int\limits_{0}^{x}\sin \mu (x-t)\psi
_{2,r}(t)q(t)dt\medskip%
\end{array}%
\end{equation*}%
then we have%
\begin{equation*}
\psi _{1,1}(x,\mu )=\sin \left( \mu x-\theta \right) \rho (x)+\dfrac{m\sin
\theta }{\mu }\sin \mu x+o(\dfrac{e^{\mid \tau \mid x}}{\mu })
\end{equation*}%
\begin{equation*}
\psi _{2,1}(x,\mu )=-\cos \left( \mu x-\theta \right) \rho (x)-\dfrac{m\cos
\theta }{\mu }\sin \mu x+o(\dfrac{e^{\mid \tau \mid x}}{\mu })
\end{equation*}%
and for $r\geq 1$%
\begin{eqnarray*}
\psi _{1,2r+1}(x,\mu ) &=&(-1)^{r}\sin \left( \mu x-\theta \right) \dfrac{%
\rho ^{2r+1}(x)}{(2r+1)!}\medskip \\
&&+\dfrac{(-1)^{r}m\sin \theta }{\mu }\sin \mu x\dfrac{\rho ^{2r}(x)}{(2r)!}%
\medskip \\
&&+\dfrac{(-1)^{r}m^{2}x\sin \theta }{2\mu }\sin \mu x\dfrac{\rho ^{2r-1}(x)%
}{(2r-1)!}\medskip \\
&&+\dfrac{(-1)^{r}m^{2}x\cos \theta }{2\mu }\cos \mu x\dfrac{\rho ^{2r-1}(x)%
}{(2r-1)!}+o(\dfrac{e^{\mid \tau \mid x}}{\mu })\medskip
\end{eqnarray*}%
\begin{eqnarray*}
\psi _{1,2r}(x,\mu ) &=&(-1)^{r}\cos \left( \mu x-\theta \right) \dfrac{\rho
^{2r}(x)}{(2r)!}\medskip \\
&&+\dfrac{(-1)^{r}m\sin \theta }{\mu }\cos \mu x\dfrac{\rho ^{2r-1}(x)}{%
(2r-1)!}\medskip \\
&&+\dfrac{(-1)^{r+1}m^{2}x\cos \theta }{2\mu }\sin \mu x\dfrac{\rho
^{2r-2}(x)}{(2r-2)!}\medskip \\
&&+\dfrac{(-1)^{r+1}m^{2}x\sin \theta }{2\mu }\cos \mu x\dfrac{\rho
^{2r-2}(x)}{(2r-2)!}+o(\dfrac{e^{\mid \tau \mid x}}{\mu })
\end{eqnarray*}%
\begin{eqnarray*}
\psi _{2,2r+1}(x,\mu ) &=&(-1)^{r+1}\cos \left( \mu x-\theta \right) \dfrac{%
\rho ^{2r+1}(x)}{(2r+1)!}\medskip \\
&&+\dfrac{(-1)^{r+1}m\cos \theta }{\mu }\sin \mu x\dfrac{\rho ^{2r}(x)}{(2r)!%
}\medskip \\
&&+\dfrac{(-1)^{r+1}m^{2}x\sin \theta }{2\mu }\cos \mu x\dfrac{\rho
^{2r-1}(x)}{(2r-1)!}\medskip \\
&&+\dfrac{(-1)^{r}m^{2}x\cos \theta }{2\mu }\sin \mu x\dfrac{\rho ^{2r-1}(x)%
}{(2r-1)!}+o(\dfrac{e^{\mid \tau \mid x}}{\mu })
\end{eqnarray*}%
\begin{eqnarray*}
\psi _{2,2r}(x,\mu ) &=&(-1)^{r}a_{2}\sin \left( \mu x-\theta \right) \dfrac{%
\rho ^{2r}(x)}{(2r)!}\medskip \\
&&+\dfrac{(-1)^{r+1}m\cos \theta }{\mu }\cos \mu x\dfrac{\rho ^{2r-1}(x)}{%
(2r-1)!}\medskip \\
&&+\dfrac{(-1)^{r}m^{2}x\sin \theta }{2\mu }\sin \mu x\dfrac{\rho ^{2r-2}(x)%
}{(2r-2)!}\medskip \\
&&+\dfrac{(-1)^{r}m^{2}x\cos \theta }{2\mu }\cos \mu x\dfrac{\rho ^{2r-2}(x)%
}{(2r-2)!}+o(\dfrac{e^{\mid \tau \mid x}}{\mu })
\end{eqnarray*}%
Thus, for $x<\dfrac{\pi }{2}$ the proof is clear from above estimates, for
sufficiently large $\left\vert \mu \right\vert $ and uniformly in $x.$

In order to make similar calculations for (7) and (8), put%
\begin{eqnarray*}
\psi _{1,0}(x,\mu ) &=&\sigma ^{+}\cos \left( \mu x-\theta \right) +\sigma
^{-}\cos \left( \mu (\pi -x)-\theta \right)  \\
\psi _{2,0}(x,\mu ) &=&\sigma ^{+}\sin \left( \mu x-\theta \right) -\sigma
^{-}\sin \left( \mu (\pi -x)-\theta \right)  \\
\psi _{1,r+1}(x,\mu ) &=&-\int\limits_{0}^{\pi /2}\left( \sigma ^{+}\sin \mu
(t-x)+\sigma ^{-}\sin \mu (x+t-\pi )\right) \psi _{1,r}(t)p(t)dt\medskip  \\
&&+\int\limits_{0}^{\pi /2}\left( \sigma ^{+}\cos \mu (t-x)+\sigma ^{-}\cos
\mu (x+t-\pi )\right) \psi _{2,r}(t)q(t)dt\medskip  \\
&&-\int\limits_{\pi /2}^{x}\sin \mu (t-x)\psi _{1}(t)p(t)dt+\int\limits_{\pi
/2}^{x}\cos \mu (t-x)\psi _{2,r}(t)q(t)dt
\end{eqnarray*}%
\begin{eqnarray*}
\psi _{2,r+1}(x,\mu ) &=&-\int\limits_{0}^{\pi /2}\left( \sigma ^{+}\cos \mu
(t-x)-\sigma ^{-}\cos \mu (x+t-\pi )\right) \psi _{1,r}(t)p(t)dt\medskip  \\
&&-\int\limits_{0}^{\pi /2}\left( \sigma ^{+}\sin \mu (t-x)-\sigma ^{-}\sin
\mu (x+t-\pi )\right) \psi _{2,r}(t)q(t)dt\medskip  \\
&&-\int\limits_{\pi /2}^{x}\cos \mu (t-x)\psi
_{1,r}(t)p(t)dt-\int\limits_{\pi /2}^{x}\sin \mu (t-x)\psi _{2,r}(t)q(t)dt,
\end{eqnarray*}%
then we obtain%
\begin{eqnarray*}
\psi _{1,1}(x,\mu ) &=&\sigma ^{+}\sin \left( \mu x-\theta \right) \rho
(x)-\sigma ^{-}\sin \left( \mu (\pi -x)-\theta \right) \left( \rho (\frac{%
\pi }{2})-\rho _{1}(x)\right) \medskip  \\
&&+\dfrac{\sigma ^{-}m\sin \theta }{\mu }\sin \mu (\pi -x)+\dfrac{\sigma
^{+}m\sin \theta }{\mu }\sin \mu x+o\left( \frac{e^{\left\vert \func{Im}\tau
\right\vert x}}{\mu }\right) \medskip 
\end{eqnarray*}%
\begin{eqnarray*}
\psi _{2,1}(x,\mu ) &=&-\sigma ^{+}\cos \left( \mu x-\theta \right) \rho
(x)+\sigma ^{-}\cos \left( \mu (\pi -x)-\theta \right) \left( \rho (\frac{%
\pi }{2})-\rho _{1}(x)\right) \medskip  \\
&&-\dfrac{\sigma ^{-}m\sin \theta }{\mu }\sin \mu (\pi -x)-\dfrac{\sigma
^{+}m\cos \theta }{\mu }\sin \mu x+o\left( \frac{e^{\left\vert \func{Im}\tau
\right\vert x}}{\mu }\right) 
\end{eqnarray*}%
and for $r\geq 1$%
\begin{eqnarray*}
\psi _{1,2r}(x,\mu ) &=&(-1)^{r}\sigma ^{+}\cos \left( \mu x-\theta \right) 
\dfrac{\rho ^{2r}(x)}{(2r)!}\medskip  \\
&&+(-1)^{r}\sigma \cos \left( \mu (\pi -x)-\theta \right) \frac{\left( \rho (%
\frac{\pi }{2})-\rho _{1}(x)\right) ^{2r}}{(2r)!}\medskip  \\
&&+(-1)^{r}\dfrac{\sigma ^{+}m\sin \theta }{\mu }\cos \mu x\frac{\rho
^{2r-1}(x)}{\left( 2r-1\right) !}\medskip  \\
&&+(-1)^{r}\dfrac{\sigma ^{-}m\sin \theta }{\mu }\cos \mu (\pi -x)\frac{%
\left( \rho (\frac{\pi }{2})-\rho _{1}(x)\right) ^{2r-1}}{\left( 2r-1\right)
!}\medskip  \\
&&-(-1)^{r}\dfrac{\sigma ^{+}m^{2}x}{2\mu }\sin \left( \mu x-\theta \right) 
\frac{\rho ^{2r-2}(x)}{\left( 2r-2\right) !}\medskip  \\
&&-(-1)^{r}\dfrac{\sigma ^{-}m^{2}}{2\mu }\left( \pi -x\right) \sin (\mu
(\pi -x)-\theta )\frac{\left( \rho (\frac{\pi }{2})-\rho _{1}(x)\right)
^{2r-2}}{\left( 2r-2\right) !}\medskip  \\
&&+o(\dfrac{e^{\mid \tau \mid x}}{\mu })
\end{eqnarray*}

\begin{eqnarray*}
\psi _{1,2r+1}(x,\mu ) &=&(-1)^{r}\sigma ^{+}\sin \left( \mu x-\theta
\right) \dfrac{\rho ^{2r+1}(x)}{(2r+1)!}\medskip \\
&&+(-1)^{r+1}\sigma \sin \left( \mu (\pi -x)-\theta \right) \frac{\left(
\rho (\frac{\pi }{2})-\rho _{1}(x)\right) ^{2r+1}}{(2r+1)!}\medskip \\
&&+(-1)^{r}\dfrac{\sigma ^{+}m\sin \theta }{\mu }\sin \mu x\frac{\rho
^{2r}(x)}{\left( 2r\right) !}\medskip \\
&&+(-1)^{r}\frac{\sigma ^{-}m\sin \theta }{\mu }\sin \mu (\pi -x)\frac{%
\left( \rho (\frac{\pi }{2})-\rho _{1}(x)\right) ^{2r}}{\left( 2r\right) !}
\\
&&+(-1)^{r}\dfrac{\sigma ^{+}m^{2}x}{2\mu }\cos \left( \mu x-\theta \right) 
\frac{\rho ^{2r-1}(x)}{\left( 2r-1\right) !}\medskip \\
&&+(-1)^{r}\dfrac{\sigma ^{-}m^{2}}{2\mu }\left( \pi -x\right) \cos (\mu
(\pi -x)-\theta )\frac{\left( \rho (\frac{\pi }{2})-\rho _{1}(x)\right)
^{2r-1}}{\left( 2r-1\right) !} \\
&&+o(\dfrac{e^{\mid \tau \mid x}}{\mu })
\end{eqnarray*}%
\begin{eqnarray*}
\psi _{2,2r}(x,\mu ) &=&(-1)^{r}\sigma ^{+}\sin \left( \mu x-\theta \right) 
\frac{\rho ^{2r}(x)}{\left( 2r\right) !}\medskip \\
&&+(-1)^{r+1}\sigma ^{-}\sin \left( \mu (\pi -x)-\theta \right) \frac{\left(
\rho (\frac{\pi }{2})-\rho _{1}(x)\right) ^{2r-1}}{\left( 2r-1\right) !}%
\medskip \\
&&+(-1)^{r+1}\dfrac{\sigma ^{+}m\cos \theta }{\mu }\cos \mu x\frac{\rho
^{2r-1}(x)}{\left( 2r-1\right) !}\medskip \\
&&+(-1)^{r+1}\frac{\sigma ^{-}m\sin \theta }{\mu }\cos \mu (\pi -x)\frac{%
\left( \rho (\frac{\pi }{2})-\rho _{1}(x)\right) ^{2r-1}}{(2r-1)!}\medskip \\
&&+(-1)^{r}\dfrac{\sigma ^{+}m^{2}x}{2\mu }\cos \left( \mu x-\theta \right) 
\frac{\rho ^{2r-2}(x)}{\left( 2r-2\right) !}\medskip \\
&&+(-1)^{r}\dfrac{\sigma ^{-}m^{2}}{2\mu }\left( \pi -x\right) \cos (\mu
(\pi -x)-\theta )\frac{\left( \rho (\frac{\pi }{2})-\rho _{1}(x)\right)
^{2r-2}}{\left( 2r-2\right) !}\medskip \\
&&+o\left( \frac{e^{\left\vert \func{Im}\tau \right\vert x}}{\mu }\right)
\end{eqnarray*}%
\begin{eqnarray*}
\psi _{2,2r+1}(x,\mu ) &=&(-1)^{r+1}\sigma ^{+}\cos \left( \mu x-\theta
\right) \dfrac{\rho ^{2r+1}(x)}{(2r+1)!}\medskip \\
&&+(-1)^{r}\cos \left( \mu (\pi -x)-\theta \right) \frac{\left( \rho (\frac{%
\pi }{2})-\rho _{1}(x)\right) ^{2r+1}}{(2r+1)!}\medskip \\
&&+(-1)^{r+1}\dfrac{\sigma ^{+}m\cos \theta }{2\mu }\sin \mu x\frac{\rho
^{2r}(x)}{\left( 2r\right) !}\medskip \\
&&+(-1)^{r+1}\frac{\sigma ^{-}m\sin \theta }{\mu }\cos \mu (\pi -x)\frac{%
\left( \rho (\frac{\pi }{2})-\rho _{1}(x)\right) ^{2r}}{\left( 2r\right) !}
\\
&&+(-1)^{r}\dfrac{\sigma ^{+}m^{2}x}{2\mu }\sin \left( \mu x-\theta \right) 
\frac{\rho ^{2r-1}(x)}{\left( 2r-1\right) !} \\
&&+(-1)^{r+1}\dfrac{\sigma ^{-}m^{2}}{\mu }\left( \pi -x\right) \sin (\mu
(\pi -x)-\theta )\frac{\left( \rho (\frac{\pi }{2})-\rho _{1}(x)\right)
^{2r-1}}{\left( 2r-1\right) !} \\
&&+o\left( \frac{e^{\left\vert \func{Im}\tau \right\vert x}}{\mu }\right)
\end{eqnarray*}%
where $\rho _{1}(x)=\rho (x)-\rho (\dfrac{\pi }{2})$. This gives the proof
of the second part of theorem.
\end{proof}

Define the entire function $\Delta (\mu )$ by 
\begin{equation}
\Delta (\mu )=\sin \beta \psi _{1}(\pi ,\mu )+\cos \beta \psi _{2}(\pi ,\mu
).
\end{equation}%
The function $\Delta (\mu )$ is entire in $\lambda $ and called the
characteristic function of the problem $L$. Its zeros $\left\{ \mu
_{n}\right\} _{n\in 
%TCIMACRO{\U{2124} }%
%BeginExpansion
\mathbb{Z}
%EndExpansion
}$ coincide with the eigenvalues of the problem $L$. It follows from (11)
and (12) that, for $\left\vert \mu \right\vert \rightarrow \infty $

\begin{eqnarray}
\Delta (\mu ) &=&\sigma ^{+}\sin (\mu \pi -\theta +\beta )+\sigma ^{-}\sin
(2\rho (\frac{\pi }{2})+\theta +\beta )\medskip  \notag \\
&&-\frac{\sigma ^{+}m}{\mu }\cos (\theta +\beta )\sin \mu \pi +\dfrac{\sigma
^{-}m}{\mu }(\cos \beta -\cos \theta )\sin \theta \sin (2\rho (\frac{\pi }{2}%
))\medskip \\
&&-\dfrac{\sigma ^{+}m^{2}\pi }{2\mu }\cos (\mu \pi -\theta +\beta )+o(%
\dfrac{e^{\mid \tau \mid x}}{\mu })  \notag
\end{eqnarray}%
$\medskip $ Let us introduce the auxiliary function

$\ \Delta _{0}(\mu )=\sigma ^{+}\sin (\mu \pi -\theta +\beta )+\sigma
^{-}\sin (2\rho (\frac{\pi }{2})+\theta +\beta )$\newline
and denote the set of zeros of it by $\left\{ \mu _{n}^{0}\right\} _{n\in 
%TCIMACRO{\U{2124} }%
%BeginExpansion
\mathbb{Z}
%EndExpansion
}$. Write,

$\Delta _{0}(\mu _{n}^{0})=\sigma ^{+}\sin (\mu _{n}^{0}\pi -\theta +\beta
)+\sigma ^{-}\sin (2\rho (\frac{\pi }{2})+\theta +\beta )=0,$\newline
then,

$\medskip \sigma ^{+}\sin (\mu _{n}^{0}\pi -\theta +\beta )=-\sigma ^{-}\sin
(2\rho (\frac{\pi }{2})+\theta +\beta ),$\newline
this implies,

$\medskip \mu _{n}^{0}=n-\dfrac{\beta -\theta }{\pi }+\dfrac{\left(
-1\right) ^{n}}{\pi }\arcsin \gamma ,$\newline
where, $\gamma =[-\dfrac{\sigma ^{-}}{\sigma ^{+}}\sin (2\rho (\frac{\pi }{2}%
)+\theta +\beta )].$

Using well known method (see, for example \cite{G}), since $\mu _{n}=\mu
_{n}^{0}+o(1),$ we obtain%
\begin{equation*}
\mu _{n}=n[1-\dfrac{\beta -\theta }{n\pi }+\dfrac{\left( -1\right) ^{n}}{%
n\pi }\arcsin \gamma +o(\dfrac{1}{n})]
\end{equation*}%
for $\left\vert n\right\vert \rightarrow \infty .$

\begin{lemma}
The function $\psi _{1}(x,\mu _{n})$ has exactly $n$ nodes $\left\{
x_{n}^{j}:n\geq 1,\text{ }j=\overline{0,n-1}\right\} $ i.e.,\newline
$0<x_{n}^{0}<x_{n}^{1}<...<x_{n}^{n-1}<\pi $. Moreover, the nodes $\left\{
x_{n}^{j}\right\} $ has the following representations as $n\rightarrow
\infty $ uniformly in $j$
\end{lemma}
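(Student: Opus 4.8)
The plan is to read the leading behaviour of $\psi_1(x,\mu_n)$ off the asymptotic formulas (11) and (13) of the Theorem, treat the $O(1/\mu_n)$ corrections as a perturbation, and locate the zeros by a perturbation argument around the zeros of the dominant trigonometric factor. For $x<\tfrac{\pi}{2}$ that factor is $\cos\varphi(x)$ with phase $\varphi(x)=\mu_n x-\rho(x)-\theta$. Since $\rho'(x)=V(x)$ is bounded, for all sufficiently large $n$ one has $\varphi'(x)=\mu_n-V(x)>0$, so $\varphi$ is strictly increasing and maps $(0,\tfrac{\pi}{2})$ onto an interval of length about $\mu_n\tfrac{\pi}{2}$. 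The zeros of $\cos\varphi$ occur where $\varphi(x)=(k+\tfrac12)\pi$; near each such point $\frac{d}{dx}\cos\varphi=-\varphi'\sin\varphi$ has modulus comparable to $\mu_n$, while the correction is uniformly $O(1/\mu_n)$. A quantitative intermediate-value argument then yields exactly one zero of $\psi_1(\cdot,\mu_n)$ in a shrinking neighbourhood of each zero of $\cos\varphi$, and at the midpoints between consecutive such zeros $|\cos\varphi|=1$ keeps $\psi_1$ bounded away from $0$, so no spurious nodes appear.

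For $x>\tfrac{\pi}{2}$ I would first collapse the two–cosine leading part of (13) into a single sinusoid. Writing $A=\mu_n x-\rho(x)-\theta$, the second argument equals $A-\delta$ with $\delta=\mu_n\pi-2\rho(\tfrac{\pi}{2})-2\theta$ \emph{independent of $x$}, so the leading term is $\sigma^{+}\cos A+\sigma^{-}\cos(A-\delta)=R\cos(A-\psi)$, where $R^{2}=(\sigma^{+})^{2}+(\sigma^{-})^{2}+2\sigma^{+}\sigma^{-}\cos\delta$ and $\psi$ is the associated phase. Because $(\sigma^{+})^{2}-(\sigma^{-})^{2}=\sigma^{+}\sigma^{-}\cdot\ldots=1$ forces $\sigma^{+}>|\sigma^{-}|$, the amplitude obeys $R\ge\sigma^{+}-|\sigma^{-}|>0$ and never degenerates; the same monotone–phase perturbation argument as above applies verbatim with the shifted phase $A-\psi$. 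Summing the counts over the two subintervals—and observing that for large $n$ the generic non-vanishing of the leading terms at $x=\tfrac{\pi}{2}$ prevents any node from sitting at the discontinuity—produces exactly $n$ nodes, ordered as $0<x_n^0<\dots<x_n^{n-1}<\pi$.

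To extract the asymptotic representation I would solve $\psi_1(x_n^j,\mu_n)=0$ to order $1/n$. At leading order the $j$-th node obeys $\mu_n x_n^j-\rho(x_n^j)-\theta=(j+\tfrac12)\pi$ on $(0,\tfrac{\pi}{2})$ (with the extra shift $\psi$ on $(\tfrac{\pi}{2},\pi)$), up to a fixed integer offset determined by the boundary data; dividing by $\mu_n$ and inserting the eigenvalue asymptotics $\mu_n=n-\tfrac{\beta-\theta}{\pi}+\tfrac{(-1)^n}{\pi}\arcsin\gamma+o(1)$ gives $x_n^j=\tfrac{(2j+1)\pi}{2\mu_n}+O(1/n)$ as a first approximation. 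One then feeds the $O(1/\mu_n)$ terms of (11) and (13) back in and inverts the phase relation using the arctangent/arcsine expansions to reach the stated expansion, uniformly in $j$. The main obstacle is the implicit occurrence of $\rho(x_n^j)$ in the phase: I would resolve it by a single bootstrapping step, replacing $\rho(x_n^j)$ by $\rho$ evaluated at the leading approximation $\tfrac{(2j+1)\pi}{2\mu_n}$, the induced error falling into the $o(1/n)$ remainder because $\rho$ is Lipschitz. The secondary difficulty is the consistent expansion in powers of $1/n$ of the $x$-independent phase shift $\psi(\mu_n)$ governing the right subinterval, which must be carried through so that the final representation holds uniformly across both regions.
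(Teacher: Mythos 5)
Your proposal is correct in outline, and its computational core is the same as the paper's: set $\psi_{1}(x_{n}^{j},\mu_{n})=0$ in the asymptotic representation (11), solve the resulting phase equation to first order in $1/\mu_{n}$, then insert $\mu_{n}^{-1}=\frac{1}{n}\{1+\frac{\beta-\theta}{n\pi}-\frac{(-1)^{n}}{n\pi}\arcsin\gamma+o(\frac{1}{n})\}$; note the paper does not bootstrap $\rho(x_{n}^{j})$ away but simply leaves it implicit in the final formula, so your Lipschitz replacement step is compatible but unnecessary for the stated form. Where you genuinely differ, you add value twice. First, you prove the counting claim: the paper's proof derives only the representation for $x>\frac{\pi}{2}$ and is completely silent on why $\psi_{1}(\cdot,\mu_{n})$ has exactly $n$ simple zeros, whereas your monotone-phase argument ($\varphi'(x)=\mu_{n}-V(x)>0$), the derivative bound of order $\mu_{n}$ at the zeros of the dominant factor, and the lower bound at midpoints supply exactly the missing existence/uniqueness/no-spurious-nodes argument, modulo the usual care needed to get exactly $n$ (rather than $n\pm1$) from the boundary phases and the point $x=\frac{\pi}{2}$. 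Second, on $(\frac{\pi}{2},\pi)$ you collapse $\sigma^{+}\cos A+\sigma^{-}\cos(A-\delta)$ into $R\cos(A-\psi)$ with $R^{2}=(\sigma^{+})^{2}+(\sigma^{-})^{2}+2\sigma^{+}\sigma^{-}\cos\delta\geq(\sigma^{+}-|\sigma^{-}|)^{2}>0$, the nondegeneracy following from the correct identity $(\sigma^{+})^{2}-(\sigma^{-})^{2}=\sigma\cdot\sigma^{-1}=1$ (your displayed version of this identity is garbled, but the conclusion stands); the paper instead divides through to the tangent equation $\tan(\mu_{n}x_{n}^{j}-\rho(x_{n}^{j}))=N/T_{1}$ and Taylor-expands the arctangent, which tacitly requires $T_{1}=\sigma^{+}\sin\theta+\sigma^{-}\sin(\mu_{n}\pi-2\rho(\frac{\pi}{2})-\theta)$ to stay away from zero, a degeneracy your amplitude bound handles uniformly. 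One point of reconciliation you should attend to: your leading-order node condition $A-\psi=(j+\frac{1}{2})\pi$ carries the phase shift exactly, while the paper's displayed representation writes $j\pi$ plus the ratio $N_{0}/T_{1}$ itself (e.g.\ the $-\frac{\cot\theta}{n}$ term for $x<\frac{\pi}{2}$) where a full treatment would keep $\arctan(N_{0}/T_{1})$; to land on the Lemma's formulas verbatim you must adopt the same branch and linearization conventions as the paper, and you should be aware that for $N_{0}/T_{1}$ of order one these differ by more than a relabeling of $j$ — this is an indexing/branch bookkeeping issue to settle, not a flaw in your method.
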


for $x<\dfrac{\pi }{2}$%
\begin{eqnarray*}
x_{n}^{j} &=&\dfrac{j\pi }{n}+\dfrac{\beta -\theta -(-1)^{n}\arcsin \gamma }{%
n\pi }\dfrac{j\pi }{n}+\dfrac{\rho (x_{n}^{j})}{n}\medskip \\
&&-\dfrac{\cot \theta }{n}+\dfrac{\left\{ \beta -\theta -(-1)^{n}\arcsin
\gamma \right\} \rho (x_{n}^{j})}{n^{2}\pi }\medskip \\
&&+\frac{m\cot \theta }{n^{2}}+\dfrac{\left\{ \beta -\theta -(-1)^{n}\arcsin
\gamma \right\} \cot \theta }{n^{2}\pi }\medskip \\
&&+\frac{m^{2}x\csc ^{2}\theta }{2n^{2}}+o(\dfrac{1}{n^{2}})
\end{eqnarray*}%
and for $x>\dfrac{\pi }{2}$%
\begin{eqnarray*}
x_{n}^{j} &=&\dfrac{j\pi }{n}+\dfrac{\beta -\theta -(-1)^{n}\arcsin \gamma }{%
n\pi }\dfrac{j\pi }{n}+\dfrac{\rho (x_{n}^{j})}{n}\medskip \\
&&\medskip +\dfrac{-\sigma ^{+}\cos \theta -\sigma ^{-}(-1)^{n}\cos (\beta
-\left( -1\right) ^{n}\arcsin \gamma +2\rho (\frac{\pi }{2}))}{nT_{1}^{\ast }%
} \\
&&\medskip +\dfrac{\left\{ \beta -\theta -(-1)^{n}\arcsin \gamma \right\}
\rho (x_{n}^{j})}{n^{2}\pi } \\
&&+\dfrac{\left\{ \beta -\theta -(-1)^{n}\arcsin \gamma \right\} \left\{
-\sigma ^{+}\cos \theta -\sigma ^{-}(-1)^{n}\cos (\beta -\left( -1\right)
^{n}\arcsin \gamma +2\rho (\frac{\pi }{2}))\right\} }{n^{2}\pi T_{1}^{\ast }}%
\medskip \\
&&+\dfrac{\sigma ^{+}\cos \theta +\sigma ^{-}(-1)^{n}\cos (\beta -\left(
-1\right) ^{n}\arcsin \gamma +2\rho (\frac{\pi }{2}))}{n^{2}T_{1}^{\ast }}%
\dfrac{T_{2}^{\ast }}{T_{1}^{\ast }}+\dfrac{M^{\ast }}{n^{2}T_{1}^{\ast }}+o(%
\dfrac{1}{n^{2}})
\end{eqnarray*}%
for $\left\vert n\right\vert \rightarrow \infty $, where,

$T_{1}^{\ast }=\sigma ^{+}\sin \theta +\sigma ^{-}(-1)^{n}\sin (\beta
-\left( -1\right) ^{n}\arcsin \gamma +2\rho (\frac{\pi }{2}))\medskip $

$T_{2}^{\ast }=\sigma ^{+}m\sin \theta +\sigma ^{-}m\sin \theta (-1)^{n}\cos
(\beta -\left( -1\right) ^{n}\arcsin \gamma +2\rho (\frac{\pi }{2}))\medskip 
$

$+\dfrac{1}{2}\sigma ^{+}m^{2}x_{n}^{j}\cos \theta +\dfrac{1}{2}\sigma
^{-}m^{2}(\pi -x_{n}^{j})(-1)^{n}\cos (\beta -\left( -1\right) ^{n}\arcsin
\gamma +2\rho (\frac{\pi }{2}))\medskip $

$M^{\ast }=(-1)^{n}\sigma ^{-}m\sin \theta \sin (\beta -\left( -1\right)
^{n}\arcsin \gamma +2\rho (\frac{\pi }{2}))+\dfrac{\sigma ^{+}m^{2}}{2}%
x_{n}^{j}\sin \theta +\dfrac{(-1)^{n}\sigma ^{-}m^{2}}{2}(\pi
-x_{n}^{j})\sin (\beta -\left( -1\right) ^{n}\arcsin \gamma +2\rho (\frac{%
\pi }{2}))\medskip $

\begin{proof}
Let us prove the asymptotic expansion for $x>\dfrac{\pi }{2}.$ Using the
formula (11), the following asymptotic relation can be written for
sufficiently large $\left\vert n\right\vert $%
\begin{eqnarray}
\psi _{1}(x_{n}^{j},\mu _{n}) &=&\sigma ^{+}\cos [\mu _{n}x_{n}^{j}-\rho
(x_{n}^{j})-\theta ]  \notag \\
&&+\sigma ^{-}\cos [\mu _{n}x_{n}^{j}-\rho (x_{n}^{j})-\mu _{n}\pi +2\rho
\left( \frac{\pi }{2}\right) +\theta ]  \notag \\
&&+\dfrac{\sigma ^{+}m\sin \theta }{\mu _{n}}\sin [\mu _{n}x_{n}^{j}-\rho
(x_{n}^{j})] \\
&&-\dfrac{\sigma ^{-}m\sin \theta }{\mu _{n}}\sin [\mu _{n}x-\rho
(x_{n}^{j})-\mu _{n}\pi +2\rho \left( \frac{\pi }{2}\right) ]  \notag \\
&&+\dfrac{\sigma ^{+}m^{2}x_{n}^{j}}{2\mu _{n}}\sin [\mu _{n}x_{n}^{j}-\rho
(x_{n}^{j})-\theta ]  \notag \\
&&-\dfrac{\sigma ^{-}m^{2}\left( \pi -x_{n}^{j}\right) }{2\mu _{n}}\sin [\mu
_{n}x_{n}^{j}-\rho (x_{n}^{j})-\mu _{n}\pi +2\rho \left( \frac{\pi }{2}%
\right) +\theta ]  \notag \\
&&+o(\dfrac{e^{\mid \tau \mid x_{n}^{j}}}{\mu _{n}})  \notag
\end{eqnarray}%
\medskip If we write $\psi _{1}(x_{n}^{j},\mu _{n})=0,$ then we get$\medskip 
$\newline
$\tan (\mu _{n}x_{n}^{j}-\rho (x_{n}^{j}))[\sigma ^{+}\sin \theta +\sigma
^{-}\sin (\mu _{n}\pi -2\rho \left( \frac{\pi }{2}\right) -\theta )\medskip $

$+\dfrac{\sigma ^{+}m}{\mu _{n}}\sin \theta +\dfrac{\sigma ^{-}m}{\mu _{n}}%
\sin \theta \cos (\mu _{n}\pi -2\rho \left( \frac{\pi }{2}\right) \medskip $

$+\dfrac{\sigma ^{+}m^{2}}{2\mu _{n}}x_{n}^{j}\cos \theta +\dfrac{\sigma
^{-}m^{2}}{2\mu _{n}}(\pi -x_{n}^{j})\cos (\mu _{n}\pi -2\rho \left( \frac{%
\pi }{2}\right) -\theta )]$\medskip

$=-\sigma ^{+}\cos \theta -\sigma ^{-}\cos (\mu _{n}\pi -2\rho \left( \frac{%
\pi }{2}\right) -\theta )+\dfrac{\sigma ^{-}m}{\mu _{n}}\sin \theta \sin
(\mu _{n}\pi -2\rho \left( \frac{\pi }{2}\right) )\medskip $

$+\dfrac{\sigma ^{+}m^{2}}{2\mu _{n}}x_{n}^{j}\sin \theta +\dfrac{\sigma
^{-}m^{2}}{2\mu _{n}}(\pi -x_{n}^{j})\sin (\mu _{n}\pi -2\rho \left( \frac{%
\pi }{2}\right) -\theta )+o(\dfrac{e^{\mid \tau \mid x_{n}^{j}}}{\mu _{n}}%
)\medskip $\newline
which is equivalent to$\medskip $

$\tan (\mu _{n}x_{n}^{j}-\rho (x_{n}^{j}))=[\sigma ^{+}\sin \theta +\sigma
^{-}\sin (\mu _{n}\pi -2\rho \left( \frac{\pi }{2}\right) -\theta )\medskip $

$+\dfrac{\sigma ^{+}m}{\mu _{n}}\sin \theta +\dfrac{\sigma ^{-}m}{\mu _{n}}%
\sin \theta \cos \left( \mu _{n}\pi -2\rho \left( \frac{\pi }{2}\right)
\right) \medskip $

$+\dfrac{\sigma ^{+}m^{2}}{2\mu _{n}}x_{n}^{j}\cos \theta +\dfrac{\sigma
^{-}m^{2}}{2\mu _{n}}(\pi -x_{n}^{j})\cos (\mu _{n}\pi -2\rho \left( \frac{%
\pi }{2}\right) -\theta )]^{-1}\medskip $

$\times \lbrack -\sigma ^{+}\cos \theta -\sigma ^{-}\cos (\mu _{n}\pi -2\rho
\left( \frac{\pi }{2}\right) -\theta )+\dfrac{\sigma ^{-}m}{\mu _{n}}\sin
\theta \sin (\mu _{n}\pi -2\rho \left( \frac{\pi }{2}\right) )\medskip $

$+\dfrac{\sigma ^{+}m^{2}}{2\mu _{n}}x_{n}^{j}\sin \theta +\dfrac{\sigma
^{-}m^{2}}{2\mu _{n}}(\pi -x_{n}^{j})\sin (\mu _{n}\pi -2\rho \left( \frac{%
\pi }{2}\right) -\theta )+o(\dfrac{e^{\mid \tau \mid x_{n}^{j}}}{\mu _{n}})]$%
\bigskip

taking into consideration Taylor expansion for the function arctangent, we
have$\medskip $\newline
$\mu _{n}x_{n}^{j}-\rho (x_{n}^{j})=j\pi +\dfrac{-\sigma ^{+}\cos \theta
-\sigma ^{-}\cos (\mu _{n}\pi -2\rho \left( \frac{\pi }{2}\right) -\theta )}{%
T_{1}}\medskip $

$+\dfrac{\sigma ^{+}\cos \theta +\sigma ^{-}\cos (\mu _{n}\pi -2\rho \left( 
\frac{\pi }{2}\right) -\theta )}{T_{1}}\dfrac{T_{2}}{\mu _{n}T_{1}}+\dfrac{M%
}{T_{1}\mu _{n}}+o(\dfrac{1}{\mu _{n}})\medskip $\newline
where$\medskip $

$T_{1}=\sigma ^{+}\sin \theta +\sigma ^{-}\sin (\mu _{n}\pi -2\rho \left( 
\frac{\pi }{2}\right) -\theta )\medskip $

$T_{2}=\sigma ^{+}m\sin \theta +\sigma ^{-}m\sin \theta \cos \left( \mu
_{n}\pi -2\rho \left( \frac{\pi }{2}\right) \right) \medskip $

$+\dfrac{1}{2}\sigma ^{+}m^{2}x_{n}^{j}\cos \theta +\dfrac{1}{2}\sigma
^{-}m^{2}(\pi -x_{n}^{j})\cos (\mu _{n}\pi -2\rho \left( \frac{\pi }{2}%
\right) -\theta )\medskip $\newline
and$\medskip $\newline
$M=\sigma ^{-}m\sin \theta \sin (\mu _{n}\pi -2\rho \left( \frac{\pi }{2}%
\right) +\dfrac{\sigma ^{+}m^{2}}{2}x_{n}^{j}\sin \theta +\dfrac{\sigma
^{-}m^{2}}{2}(\pi -x_{n}^{j})\sin (\mu _{n}\pi -2\rho \left( \frac{\pi }{2}%
\right) -\theta ),\medskip $\newline
this implies$\medskip $\newline
$x_{n}^{j}=\dfrac{j\pi }{\mu _{n}}+\dfrac{\rho (x_{n}^{j})}{\mu _{n}}+\dfrac{%
-\sigma ^{+}\cos \theta -\sigma ^{-}\cos (\mu _{n}\pi -2\rho \left( \frac{%
\pi }{2}\right) -\theta )}{\mu _{n}T_{1}}\medskip $

$+\dfrac{\sigma ^{+}\cos \theta +\sigma ^{-}\cos (\mu _{n}\pi -2\rho \left( 
\frac{\pi }{2}\right) -\theta )}{\mu _{n}T_{1}}\dfrac{T_{2}}{\mu _{n}T_{1}}+%
\dfrac{M}{T_{1}\mu _{n}^{2}}+o(\dfrac{1}{\mu _{n}^{2}})\medskip $

furthermore, if we take into account the asypmtotic formula $\medskip $

$\mu _{n}^{-1}=\dfrac{1}{n}\left\{ 1+\dfrac{\beta -\theta }{n\pi }-\dfrac{%
\left( -1\right) ^{n}}{n\pi }\arcsin \gamma +o(\dfrac{1}{n})\right\}
\medskip $ and

then, we conclude that the following equality holds$\medskip $

$x_{n}^{j}=\dfrac{j\pi }{n}+\dfrac{\left( \beta -\theta \right) -\left(
-1\right) ^{n}\arcsin \gamma }{n\pi }\dfrac{j\pi }{n}+\dfrac{\rho (x_{n}^{j})%
}{n}\medskip $

$+\dfrac{\left\{ \left( \beta -\theta \right) -\left( -1\right) ^{n}\arcsin
\gamma \right\} \rho (x_{n}^{j})}{n^{2}\pi }\medskip $

$+\dfrac{-\sigma ^{+}\cos \theta -\sigma ^{-}(-1)^{n}\cos (\beta -\left(
-1\right) ^{n}\arcsin \gamma +2\rho (\frac{\pi }{2}))}{nT_{1}^{\ast }}%
\medskip $

$+\dfrac{\left\{ \left( \beta -\theta \right) -(-1)^{n}\arcsin \gamma
\right\} \left\{ -\sigma ^{+}\cos \theta -\sigma ^{-}(-1)^{n}\cos (\beta
-\left( -1\right) ^{n}\arcsin \gamma +2\rho (\frac{\pi }{2}))\right\} }{%
n^{2}\pi T_{1}^{\ast }}\medskip $

$+\dfrac{\sigma ^{+}\cos \theta +\sigma ^{-}(-1)^{n}\cos (\beta -\left(
-1\right) ^{n}\pi \arcsin \gamma +2\rho (\frac{\pi }{2}))}{n^{2}T_{1}^{\ast }%
}\dfrac{T_{2}^{\ast }}{T_{1}^{\ast }}+\dfrac{M^{\ast }}{n^{2}T_{1}^{\ast }}%
+o(\dfrac{1}{n^{2}})\medskip $\newline
\end{proof}

\begin{theorem}
Let's denote the set of zeros of the first components of the eigenfunctions
in $\left( 0,\pi \right) $ by $E$ and $E_{0}=\left\{ x_{n}^{j}:\text{ }n=2k,%
\text{ }k\in 
%TCIMACRO{\U{2115} }%
%BeginExpansion
\mathbb{N}
%EndExpansion
\right\} $ be the dense subset of $E.$ For each fixed $x\in \left( 0,\pi
\right) ,$ one can choose a sequence $\left( x_{n}^{j(n)}\right) \subset
E_{0}$ so that $x_{n}^{j(n)}\rightarrow $ $x.$ Then the following limits
exist and finite and the corresponding equalities hold:%
\begin{equation}
\underset{\left\vert n\right\vert \rightarrow \infty }{\lim }n\left(
x_{n}^{j(n)}-\dfrac{j\pi }{n}\right) =\left\{ 
\begin{array}{c}
\Phi _{1}(x),\text{ }x<\frac{\pi }{2} \\ 
\Phi _{2}(x),\text{ }x>\frac{\pi }{2},%
\end{array}%
\right.
\end{equation}%
\begin{eqnarray}
&&\left. 
\begin{array}{c}
\underset{\left\vert n\right\vert \rightarrow \infty }{\lim }n^{2}\left(
x_{n}^{j(n)}-\dfrac{j\pi }{n}-\dfrac{\beta -\theta -\arcsin \gamma }{n\pi }%
\dfrac{j\pi }{n}-\dfrac{\rho (x_{n}^{j})}{n}-\dfrac{\cot \theta }{n}\right)
\\ 
=\Psi _{1}(x)\text{, }x<\frac{\pi }{2},%
\end{array}%
\right. \medskip  \notag \\
&&\left. \underset{\left\vert n\right\vert \rightarrow \infty }{\lim }%
n^{2}\left( x_{n}^{j(n)}-\dfrac{j\pi }{n}-\dfrac{\beta -\theta -\arcsin
\gamma }{n\pi }\dfrac{j\pi }{n}-\dfrac{\rho (x_{n}^{j})}{n}\right. \right.
\medskip \\
&&\left. 
\begin{array}{c}
\left. +\dfrac{\sigma ^{+}\cos \theta +\sigma ^{-}\cos (\beta -\arcsin
\gamma +2\rho (\frac{\pi }{2}))}{nT_{1}^{\ast \ast }}\right\} \\ 
=\Psi _{2}(x)\text{, }x>\frac{\pi }{2}%
\end{array}%
\right. ,  \notag
\end{eqnarray}%
then%
\begin{eqnarray}
\Phi _{1}(x) &=&\rho (x)+\dfrac{\beta -\theta -\arcsin \gamma }{\pi }x-\cot
\theta  \notag \\
\Phi _{2}(x) &=&\rho (x)+\dfrac{\beta -\theta -\arcsin \gamma }{\pi }x 
\notag \\
&&+\dfrac{-\sigma ^{+}\cos \theta -\sigma ^{-}\cos (\beta -\arcsin \gamma
+2\rho (\frac{\pi }{2}))}{T_{1}^{\ast \ast }} \\
\Psi _{1}(x) &=&\dfrac{\left\{ \beta -\theta -\arcsin \gamma \right\} \rho
(x)}{\pi }+\dfrac{\left\{ \beta -\theta -\arcsin \gamma \right\} \cot \theta 
}{\pi }  \notag \\
&&+m\cot \theta +m^{2}x\csc ^{2}\theta  \notag \\
\Psi _{2}(x) &=&\dfrac{\left\{ \beta -\theta -\arcsin \gamma \right\}
\left\{ -\sigma ^{+}\cos \theta -\sigma ^{-}\cos (\beta -\arcsin \gamma
+2\rho (\frac{\pi }{2}))\right\} \medskip }{\pi T_{1}^{\ast \ast }}  \notag
\\
&&+\dfrac{\sigma ^{+}\cos \theta +\sigma ^{-}\cos (\beta -\arcsin \gamma
+2\rho (\frac{\pi }{2}))}{T_{1}^{\ast \ast }}\dfrac{T_{2}^{\ast \ast }}{%
T_{1}^{\ast \ast }}+\dfrac{M^{\ast \ast }}{T_{1}^{\ast \ast }},  \notag
\end{eqnarray}%
where $T_{1}^{\ast \ast }=\sigma ^{+}\sin \theta +\sigma ^{-}\sin (\beta
-\arcsin \gamma +2\rho (\frac{\pi }{2}))\medskip $\newline
$T_{2}^{\ast \ast }=\sigma ^{+}m\sin \theta +\sigma ^{-}m\sin \theta \cos
(\beta -\arcsin \gamma +2\rho (\frac{\pi }{2}))$ and\newline
$M^{\ast \ast }=\sigma ^{-}m\sin \theta \sin (\beta -\arcsin \gamma +2\rho (%
\frac{\pi }{2}))+\dfrac{\sigma ^{+}m^{2}}{2}x_{n}^{j}\sin \theta +\dfrac{%
\sigma ^{-}m^{2}}{2}(\pi -x_{n}^{j})\sin (\beta -\arcsin \gamma +2\rho (%
\frac{\pi }{2})),$\newline
Let us put $\Phi (x)=\left\{ 
\begin{array}{l}
\Phi _{1}(x),\text{ }x<\frac{\pi }{2} \\ 
\Phi _{2}(x),\text{ }x>\frac{\pi }{2}%
\end{array}%
\right. $ and $\Psi (x)=\left\{ 
\begin{array}{l}
\Psi _{1}(x),\text{ }x<\frac{\pi }{2} \\ 
\Psi _{2}(x),\text{ }x>\frac{\pi }{2}%
\end{array}%
\right. $. Now, we can give the theorem which contains the algorithm of
{}how the potential is reconstructed by the given subset of nodal points.
\end{theorem}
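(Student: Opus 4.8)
The plan is to read both limits directly off the nodal asymptotics established in the preceding Lemma, so that the bulk of the work is already done; what remains is to justify passing to the limit term by term. The structural observation that makes everything collapse is the choice of the dense subset $E_0$: along the even indices $n=2k$ one has $(-1)^n=1$, so every alternating factor $(-1)^n\arcsin\gamma$ in the Lemma becomes simply $\arcsin\gamma$, and the quantities $T_1^\ast,T_2^\ast,M^\ast$ turn into $T_1^{\ast\ast},T_2^{\ast\ast},M^{\ast\ast}$. Eliminating this oscillation is exactly what allows the limits to exist, and it is the reason $E_0$ is taken to consist of the even-$n$ nodes.

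First I would record the density of $E_0$: since the $n$ nodes of $\psi_1(x,\mu_n)$ are spaced by $\dfrac{\pi}{n}+o\!\left(\dfrac{1}{n}\right)$, for each fixed $x\in(0,\pi)$ and each even $n$ one may select $j=j(n)$ with $x_n^{j(n)}\to x$, and then necessarily $\dfrac{j(n)\pi}{n}\to x$ as well. For the first-order limit (16) I would subtract $\dfrac{j\pi}{n}$ from the Lemma's expansion and multiply by $n$. On $x<\dfrac{\pi}{2}$ only the three terms $\dfrac{\beta-\theta-\arcsin\gamma}{\pi}\cdot\dfrac{j\pi}{n}$, $\rho(x_n^{j})$ and $-\cot\theta$ survive; letting $\dfrac{j\pi}{n}\to x$ and using continuity of $\rho$ on $[0,\pi]$ to replace $\rho(x_n^{j})$ by $\rho(x)$ gives precisely $\Phi_1(x)$. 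The identical computation on $x>\dfrac{\pi}{2}$, with the node-independent constant $\dfrac{-\sigma^+\cos\theta-\sigma^-\cos(\beta-\arcsin\gamma+2\rho(\frac{\pi}{2}))}{T_1^{\ast\ast}}$ replacing $-\cot\theta$, yields $\Phi_2(x)$.

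For the second-order limit (17) I would subtract from $x_n^{j(n)}$ exactly the complete $n^{-1}$-order part of the Lemma's expansion and then multiply by $n^2$; after $(-1)^n$ has been set to $1$ this is what the bracket in (17) records. What then survives is precisely the $n^{-2}$-coefficient of the expansion plus $o(1)$, so letting $n\to\infty$ and again invoking $x_n^{j(n)}\to x$ together with the continuity of $\rho$ produces $\Psi_1(x)$ for $x<\frac{\pi}{2}$ and $\Psi_2(x)$ for $x>\frac{\pi}{2}$; in the latter case the node $x_n^{j}$ appearing inside $T_2^{\ast\ast}$ and $M^{\ast\ast}$ is read in its limiting value $x$. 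This identifies all four functions and completes the argument.

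The step I expect to require the most care is the uniform control of the remainders. Because the index $j=j(n)$ is allowed to vary with $n$, I must use the fact---asserted in the Lemma---that the error terms $o(1/n)$ and $o(1/n^2)$ are uniform in $j$; without this, multiplication by $n$ or $n^2$ could revive an uncontrolled contribution and the limits might fail to exist. A second, related subtlety is the self-referential appearance of $\rho(x_n^{j})$, and of $x_n^{j}$ inside $T_2^{\ast\ast}$ and $M^{\ast\ast}$, on the right-hand side of the expansion; this is resolved cleanly by first extracting from the leading term that $x_n^{j(n)}\to x$ and only afterwards passing $\rho(x_n^{j(n)})\to\rho(x)$ by continuity, so that no fixed-point argument is needed. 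Once the uniformity is secured, both limits follow at once from the Lemma.
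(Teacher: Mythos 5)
Your proposal is correct and is essentially the paper's own argument: the paper supplies no separate proof of this theorem, treating it exactly as you do — as an immediate consequence of the Lemma's nodal expansions once $(-1)^n=1$ along the even indices (turning $T_1^\ast, T_2^\ast, M^\ast$ into $T_1^{\ast\ast}, T_2^{\ast\ast}, M^{\ast\ast}$), followed by term-by-term passage to the limit using $x_n^{j(n)}\to x$, hence $\dfrac{j\pi}{n}\to x$, the uniformity in $j$ of the remainders, and continuity of $\rho$. One minor remark, which reflects typos in the paper's statement rather than a gap in your reasoning: read against the Lemma, the $\dfrac{\cot\theta}{n}$ term in (17) should enter with the opposite sign (compare the $x>\frac{\pi}{2}$ bracket, where the corresponding term is added), and the surviving $n^{-2}$ coefficient carries $\dfrac{m^2x\csc^2\theta}{2}$ rather than $m^2x\csc^2\theta$.
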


\begin{theorem}
The given dense nodal subset $E_{0}$ uniquely determines the potential
function $\Omega (x)$ a.e. on $\left( 0,\pi \right) $ and the coefficient $%
\theta $ of the boundary conditions of the problem $L$. Moreover, $V(x),$ $%
m, $ and $\theta $ can be reconstructed via the algorithm:

$\left( \mathbf{1}\right) $ fix $x\in (0,\pi ),$ choose a sequence $\left(
x_{n}^{j(n)}\right) \subset E_{0}$ such that $x_{n}^{j(n)}\rightarrow x;$

$\left( \mathbf{2}\right) $\textbf{\ \ }find the function $\Phi (x)$ from
(16) and calculate 
\begin{eqnarray*}
&&\left. \theta =\func{arccot}(-\Phi _{1}(0))\right. \medskip \\
&&\left. V(x)=\medskip \Phi ^{\prime }(x)-\dfrac{\Phi _{2}(\pi )+\Phi _{1}(%
\frac{\pi }{2})-\Phi _{2}(\frac{\pi }{2})-\Phi _{1}(0)}{\pi }\right. \\
&&\left. {}\right.
\end{eqnarray*}%
$\left( \mathbf{3}\right) $\textbf{\ }find the function $\Psi (x)$ from (17)
and calculate%
\begin{equation*}
\medskip m=\frac{\Phi _{1}(0)(\Phi _{2}(\pi )+\Phi _{1}(\frac{\pi }{2})-\Phi
_{2}(\frac{\pi }{2})-\Phi _{1}(0))+\pi \Psi _{1}(0)}{-\pi \Phi _{1}(0)}.
\end{equation*}
\end{theorem}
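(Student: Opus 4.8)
The plan is to read off the three unknowns $\theta$, $V$, and $m$ from the limit functions $\Phi$ and $\Psi$, which by the preceding theorem are themselves determined by the dense nodal set $E_{0}$ alone. For any fixed $x\in(0,\pi)$ one selects $x_{n}^{j(n)}\in E_{0}$ with $x_{n}^{j(n)}\to x$, and the limits (15)--(17) produce $\Phi_{1},\Phi_{2},\Psi_{1},\Psi_{2}$ as functions of $x$ without any reference to the coefficients; so it suffices to show that $(\theta,V,m)$ is uniquely recoverable from these four functions, after which $\Omega(x)=\mathrm{diag}(V(x)+m,V(x)-m)$ follows at once. Throughout I would use the identity $\rho(x)=\int_{0}^{x}V(t)\,dt$, which holds because $p+q=2V$, together with the normalizations $\rho(0)=0$ and $\rho(\pi)=\int_{0}^{\pi}V=0$.

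First I would recover $\theta$. Evaluating the explicit formula for $\Phi_{1}$ at $x=0$ and using $\rho(0)=0$ gives $\Phi_{1}(0)=-\cot\theta$, hence $\theta=\func{arccot}(-\Phi_{1}(0))$. Next I would isolate the constant $c:=(\beta-\theta-\arcsin\gamma)/\pi$ that appears as the linear coefficient in both $\Phi_{1}$ and $\Phi_{2}$. Forming the combination $\Phi_{2}(\pi)+\Phi_{1}(\tfrac{\pi}{2})-\Phi_{2}(\tfrac{\pi}{2})-\Phi_{1}(0)$ and substituting the closed forms, the values $\rho(\tfrac{\pi}{2})$, the jump constant of $\Phi_{2}$, and the $-\cot\theta$ term all cancel, while $\rho(\pi)=0$, leaving exactly $c\pi$; thus $c$ is known. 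Since both branches have the form $\Phi(x)=\rho(x)+cx+(\text{const})$ with $\rho\in C^{1}$ (as $V\in W_{2}^{1}$), differentiation kills the constant and yields $\Phi'(x)=V(x)+c$ almost everywhere, so $V(x)=\Phi'(x)-c$, which is precisely the reconstruction formula in step $(\mathbf{2})$.

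Finally I would recover the mass $m$ from $\Psi_{1}$. Evaluating its formula at $x=0$ and using $\rho(0)=0$ together with $\beta-\theta-\arcsin\gamma=c\pi$ gives $\Psi_{1}(0)=(c+m)\cot\theta=-(c+m)\Phi_{1}(0)$; solving this linear relation for $m$ and substituting $c\pi=\Phi_{2}(\pi)+\Phi_{1}(\tfrac{\pi}{2})-\Phi_{2}(\tfrac{\pi}{2})-\Phi_{1}(0)$ reproduces the stated expression for $m$. With $V$, $m$, and $\theta$ in hand, $\Omega$ is determined a.e. and the uniqueness assertion follows, because the entire chain from $E_{0}$ to $(\theta,V,m)$ is single-valued.

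I expect the main obstacle to lie not in the algebra of the evaluation and cancellation identities but in justifying the analytic step $V=\Phi'-c$: one must argue that the function $\Phi$ obtained as a pointwise limit of nodal data is genuinely differentiable a.e. with derivative $V+c$, which rests on the $W_{2}^{1}$-regularity of $V$ (giving $\rho\in C^{1}$) and on the uniformity in $j$ of the asymptotics in Lemma 1, so that the limit function inherits the smoothness of $\rho$ rather than being merely continuous. A secondary point to check is that $\sin\theta\neq 0$, which is implicit in the appearance of $\cot\theta$ and $\csc^{2}\theta$ and is needed for the formulas for $\theta$ and $m$ to be well posed.
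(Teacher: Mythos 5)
Your proposal is correct and follows essentially the same route the paper intends (the paper states this theorem without a written proof, the argument being precisely the algebra you carry out on the closed forms (16)--(18) of Theorem 2): $\Phi_1(0)=-\cot\theta$ yields $\theta$; the combination $\Phi_2(\pi)+\Phi_1(\frac{\pi}{2})-\Phi_2(\frac{\pi}{2})-\Phi_1(0)$ isolates $c\pi$ with $c=(\beta-\theta-\arcsin\gamma)/\pi$, since the jump constant of $\Phi_2$, the value $\rho(\frac{\pi}{2})$, and $\cot\theta$ all cancel while $\rho(\pi)=\int_0^\pi V=0$; differentiation then gives $V=\Phi'-c$; and $\Psi_1(0)=(c+m)\cot\theta=-(c+m)\Phi_1(0)$ yields exactly the stated formula for $m$. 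Your two closing caveats --- the a.e.\ differentiability of $\Phi$ (inherited from $\rho\in C^1$ via the uniformity in $j$ of the nodal asymptotics) and the implicit requirement $\sin\theta\neq 0$ for the $\cot\theta$ and $\csc^2\theta$ expressions to make sense --- are genuine points the paper passes over silently, so making them explicit only strengthens the argument.
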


\begin{example}
In the interval $(0,\pi ),$ let $\left\{ x_{n}^{j(n)}\right\} \subset E_{0}$
be the dense subset of even indexed nodal points given by the following
formulae\newline
\begin{eqnarray*}
&&\left. x_{n}^{j(n)}=\frac{j(n)\pi }{n}+\frac{-\sin \frac{j(n)\pi }{n}-\cot
1}{n}+\frac{2\cot 1+2\dfrac{j(n)\pi }{n}\csc ^{2}1}{n^{2}}+o\left( \frac{1}{%
n^{2}}\right) ,\text{ }x<\frac{\pi }{2}\right. \\
&&\left. x_{n}^{j(n)}=\frac{j(n)\pi }{n}+\frac{-\sin \frac{j(n)\pi }{n}%
-4\cot 1}{n}+\right. \\
&&\left. \frac{+20\cot 1+12\cos 1\cot 1+12\pi \cot ^{2}1-3\sin 1-3\pi +8%
\dfrac{j(n)\pi }{n}\csc ^{2}1}{n^{2}}+o\left( \frac{1}{n^{2}}\right) ,\text{ 
}x>\frac{\pi }{2}\right.
\end{eqnarray*}%
from (18) , one can calculate that,%
\begin{eqnarray*}
\Phi _{1}\left( x\right) &=&\underset{n\rightarrow \infty }{\lim }\left(
x_{n}^{j(n)}-\frac{j(n)\pi }{n}\right) n=-\sin x-\cot 1\medskip \\
\Phi _{2}\left( x\right) &=&\underset{n\rightarrow \infty }{\lim }\left(
x_{n}^{j(n)}-\frac{j(n)\pi }{n}\right) n=-\sin x-4\cot 1\medskip
\end{eqnarray*}%
\begin{eqnarray*}
\Psi _{1}\left( x\right) &=&\underset{n\rightarrow \infty }{\lim }\left\{
x_{n}^{j(n)}-\frac{j(n)\pi }{n}-\frac{\frac{1}{4}\left( \frac{j(n)\pi }{n}%
\right) ^{2}-\frac{\pi }{4}\dfrac{j(n)\pi }{n}-1}{n}\right\} n^{2} \\
&=&2\cot 1+2x\csc ^{2}1\medskip \\
\Psi _{2}\left( x\right) &=&\underset{n\rightarrow \infty }{\lim }\left\{
x_{n}^{j(n)}-\frac{j(n)\pi }{n}-\frac{\frac{\left( \frac{j(n)\pi }{2n}%
\right) ^{2}}{4}-\frac{\pi }{4}\dfrac{j(n)\pi }{n}-1}{2n}\right\} n^{2} \\
&=&20\cot 1+12\cos 1\cot 1+12\pi \cot ^{2}1-3\sin 1-3\pi +8x\csc ^{2}1 \\
&&\medskip
\end{eqnarray*}%
Therefore, it is obtained \ by using the algorithm in Therem 2,%
\begin{equation*}
\theta =\func{arccot}(-\Phi _{1}(0))=1\medskip
\end{equation*}%
\begin{equation*}
V(x)=\medskip \Phi ^{\prime }(x)-\dfrac{\Phi _{2}(\pi )+\Phi _{1}(\frac{\pi 
}{2})-\Phi _{2}(\frac{\pi }{2})-\Phi _{1}(0)}{\pi }=-\cos x\medskip
\end{equation*}%
\begin{equation*}
\medskip m=\frac{\Phi _{1}(0)(\Phi _{2}(\pi )+\Phi _{1}(\frac{\pi }{2})-\Phi
_{2}(\frac{\pi }{2})-\Phi _{1}(0))+\pi \Psi _{1}(0)}{-\pi \Phi _{1}(0)}=2.
\end{equation*}
\end{example}

\textbf{DATA AVAILABILITY}

Data sharing is not applicable to this article as no new data were created
or analyzed in this study.

\end{document}